\title[Inverse Cartier transform via exponential]{An inverse Cartier transform via exponential in positive characteristic}
\author[Guitang Lan]{Guitang Lan}
\email{lan@uni-mainz.de}
\address{Institut f\"{u}r  Mathematik, Universit\"{a}t
Mainz, Mainz, 55099, Germany}
\author[Mao Sheng]{Mao Sheng}
\email{msheng@ustc.edu.cn}
\address{School of Mathematical Sciences,
University of Science and Technology of China, Hefei, 230026, China}
\author[Kang Zuo]{Kang Zuo}
\email{zuok@uni-mainz.de}
\address{Institut f\"{u}r  Mathematik, Universit\"{a}t
Mainz, Mainz, 55099, Germany}
\begin{document}

\theoremstyle{plain}
\newtheorem{thm}{Theorem}
\newtheorem{theorem}[thm]{Theorem}
\newtheorem{lemma}[thm]{Lemma}
\newtheorem{corollary}[thm]{Corollary}
\newtheorem{proposition}[thm]{Proposition}
\newtheorem{addendum}[thm]{Addendum}
\newtheorem{variant}[thm]{Variant}
\newtheorem*{thm0}{Theorem}
\newtheorem*{thm35}{Theorem 3.5}
\theoremstyle{definition}
\newtheorem{construction}[thm]{Construction}
\newtheorem{notations}[thm]{Notations}
\newtheorem{question}[thm]{Question}
\newtheorem{problem}[thm]{Problem}
\newtheorem{remark}[thm]{Remark}
\newtheorem{remarks}[thm]{Remarks}
\newtheorem{definition}[thm]{Definition}
\newtheorem{claim}[thm]{Claim}
\newtheorem{assumption}[thm]{Assumption}
\newtheorem{assumptions}[thm]{Assumptions}
\newtheorem{properties}[thm]{Properties}
\newtheorem{example}[thm]{Example}
\newtheorem{conjecture}[thm]{Conjecture}
\newtheorem{sublemma}[thm]{Sublemma}
\numberwithin{equation}{thm}
\newcommand{\pP}{{\mathfrak p}}
\newcommand{\sA}{{\mathcal A}}
\newcommand{\sB}{{\mathcal B}}
\newcommand{\sC}{{\mathcal C}}
\newcommand{\sD}{{\mathcal D}}
\newcommand{\sE}{{\mathcal E}}
\newcommand{\sF}{{\mathcal F}}
\newcommand{\sG}{{\mathcal G}}
\newcommand{\sH}{{\mathcal H}}
\newcommand{\sI}{{\mathcal I}}
\newcommand{\sJ}{{\mathcal J}}
\newcommand{\sK}{{\mathcal K}}
\newcommand{\sL}{{\mathcal L}}
\newcommand{\sM}{{\mathcal M}}
\newcommand{\sN}{{\mathcal N}}
\newcommand{\sO}{{\mathcal O}}
\newcommand{\sP}{{\mathcal P}}
\newcommand{\sQ}{{\mathcal Q}}
\newcommand{\sR}{{\mathcal R}}
\newcommand{\sS}{{\mathcal S}}
\newcommand{\sT}{{\mathcal T}}
\newcommand{\sU}{{\mathcal U}}
\newcommand{\sV}{{\mathcal V}}
\newcommand{\sW}{{\mathcal W}}
\newcommand{\sX}{{\mathcal X}}
\newcommand{\sY}{{\mathcal Y}}
\newcommand{\sZ}{{\mathcal Z}}
\newcommand{\A}{{\mathbb A}}
\newcommand{\B}{{\mathbb B}}
\newcommand{\C}{{\mathbb C}}
\newcommand{\D}{{\mathbb D}}
\newcommand{\E}{{\mathbb E}}
\newcommand{\F}{{\mathbb F}}
\newcommand{\G}{{\mathbb G}}
\renewcommand{\H}{{\mathbb H}}
\newcommand{\I}{{\mathbb I}}
\newcommand{\J}{{\mathbb J}}
\renewcommand{\L}{{\mathbb L}}
\newcommand{\M}{{\mathbb M}}
\newcommand{\N}{{\mathbb N}}
\renewcommand{\P}{{\mathbb P}}
\newcommand{\Q}{{\mathbb Q}}
\newcommand{\R}{{\mathbb R}}
\newcommand{\SSS}{{\mathbb S}}
\newcommand{\T}{{\mathbb T}}
\newcommand{\U}{{\mathbb U}}
\newcommand{\V}{{\mathbb V}}
\newcommand{\W}{{\mathbb W}}
\newcommand{\g}{{\gamma}}
\newcommand{\bb}{{\beta}}
\newcommand{\as}{{\alpha}}
\newcommand{\id}{{\rm id}}
\newcommand{\rk}{{\rm rank}}
\newcommand{\END}{{\mathbb E}{\rm nd}}
\newcommand{\End}{{\rm End}}
\newcommand{\Hom}{{\rm Hom}}
\newcommand{\Hg}{{\rm Hg}}
\newcommand{\tr}{{\rm tr}}
\newcommand{\Sl}{{\rm Sl}}
\newcommand{\Gl}{{\rm Gl}}
\newcommand{\Cor}{{\rm Cor}}

\newcommand{\SO}{{\rm SO}}
\newcommand{\OO}{{\rm O}}
\newcommand{\SP}{{\rm SP}}
\newcommand{\Sp}{{\rm Sp}}
\newcommand{\UU}{{\rm U}}
\newcommand{\SU}{{\rm SU}}
\newcommand{\SL}{{\rm SL}}
\thanks{This work is supported by the SFB/TR 45 `Periods, Moduli
Spaces and Arithmetic of Algebraic Varieties' of the DFG}
\begin{abstract}
Let $k$ be a perfect field of odd characteristic $p$ and $X_0$ a
smooth connected algebraic variety over $k$ which is assumed to be
$W_2(k)$-liftable. In this short note we associate a flat bundle to
a nilpotent Higgs bundle over $X_0$ of exponent $n\leq p-1$ via the
exponential function. Presumably, the association is equivalent to
the inverse Cartier transform in \cite{OV} for these Higgs bundles.
However this point has not been verified in the note. Instead, we
show the equivalence of the association with that of \cite{SXZ} in
the geometric case. The construction relies on the cocycle property
of the difference of different Frobenius liftings over $W_2(k)$,
which plays the key role in the proof of $E_1$-degeration of the
Hodge to de Rham spectral sequence of $X_0$ in \cite{DI}.
\end{abstract}

\maketitle


A grand theory in complex geometry is the nonabelian Hodge theory
connecting flat bundles with Higgs bundles. It has been achieved after
works of many people, notably Narasimhan and Seshadri, Uhlenbeck and
Yau, Donaldson, Hitchin, Corlette, Simpson (see \cite{NS},
\cite{UY}, \cite{Do}, \cite{Hi}, \cite{Co}, \cite{Si1}). Recently,
Ogus and Vologodsky (see \cite{OV}) has established an analogue of
the theory in positive characteristic. In this short note, we intend
to use some rudiments in differential geometry to exhibit a
construction of flat bundles for an important class of Higgs
bundles, which is believed to be equivalent to the inverse Cartier
transform in loc. cit.. \\
Let $k$ be a perfect field with odd characteristic $p$ and $X_0$ a smooth connected algebraic variety over $k$ of dimension $d$. We assume that there is a $W_2:=W_2(k)$-lifting of $X_0$. Namely, there exists a smooth $W_2$-scheme $X_1$ whose closed fiber is $X_0$. \\
Let $(E,\theta)$ be a nilpotent Higgs bundle over $X_0$ of exponent $n\leq p-1$. Locally over an open affine subset $U\subset X_0$ with local coordinates $\{t_1,\cdots,t_d\}$, the Higgs field $\theta$ under a local basis of sections of $E$ over $U$ is written as $\theta=\sum\theta_idt_i$, where $\theta_i=\partial_{t_i}\lrcorner\theta$ is a matrix of elements in $\sO_{U}$. The integrability of the Higgs field is then equivalent to the commuting relations $[\theta_i,\theta_j]=0,  1\leq i < j\leq d$, and the nilpotent condition put on the Higgs field means that $\prod_{j=1}^{d}\theta_j^{i_j}=0$ once $\sum_{j=1}^d i_j\geq n+1$.\\
Before carrying out our construction, we first recall some basics on connection and curvature that could be found in any standard book in differential geometry (see e.g. \cite{K}). Let $H$ be a vector bundle over $X_0$ together with a ($k$)-connection $\nabla: H\to H\otimes \Omega_{X_0}$. Take a local basis $e_U$ of sections of $H$ over $U$. Then over $U$, $\nabla=d+A_U$, where $A_U$ is a matrix-valued one form given by   $\nabla(e_U)=A_Ue_U$. More precisely, the formula means that for a local section $s=s_Ue_U$ of $H$ over $U$, where $s_U$ is a row vector with elements in $\sO_{U}$, one has $\nabla (s)=ds_{U}e_U+ s_UA_Ue_U$. Let $A'_{U}$ be the connection one form of $\nabla$ under another local basis $e'_U=Me_U$ of $H$ over $U$, where $M$ is an invertible matrix with entries in $\sO_U$. Then one has the transformation formula of connection forms:
$$
A'_U=dMM^{-1}+MA_UM^{-1}
$$
Finally, the curvature $K$ of $\nabla$ is defined to the composite
$$
H\stackrel{\nabla}{\longrightarrow}H\otimes \Omega_{X_0}\stackrel{\nabla^1}{\longrightarrow}H\otimes \Omega^2_{X_0},
$$
where $\nabla^1(h\otimes \omega)=\nabla(h)\wedge \omega+h\otimes d\omega$. It is $\sO_{X_0}$-linear and under the
local basis $e_U$ of $H$ over $U$ expressed by the formula $K_U=dA_U + A_U\wedge A_U$.
We say that the connection is integrable if its curvature is zero. A vector bundle with an integrable connection is called a flat bundle.\\
Now we proceed to the construction of the association. First of all we take an affine covering $\mathcal{U}=\{ U'_{\alpha}\}$ of $X_1$. Note that over each $U'_{\alpha}$ we can take a lifting $F_{\alpha}: U'_{\alpha}\to U'_{\alpha}$ of the absolute Frobenius $F_0: U_{\alpha}\to U_{\alpha}$ determined by the power $p$ map, where $U_{\alpha}$ is the closed fiber of $U'_{\alpha}$. The composite of $\sO_{U'_{\alpha}}$-morphisms
$$
F_{\as}^*\Omega_{U'_{\as}} \stackrel{dF_{\alpha}}{\longrightarrow}
p\Omega_{U'_{\as}}\stackrel{\frac{1}{[p]}}{\cong}
\Omega_{U_{\alpha}} $$ descends clearly to an
$\sO_{U_{\alpha}}$-morphism $ \frac{dF_{\alpha}}{[p]}:
F_0^{*}\Omega_{U_{\alpha}}\to \Omega_{U_{\alpha}}$. We consider then
the vector bundle $H_{\alpha}:=F_0^*E|_{U_{\alpha}}$ over
$U_{\alpha}$, where $E|_{U_{\alpha}}$ denotes for the restriction of
$E$ over $U_{\alpha}$. Choose a local basis $e_{\alpha}$ of
$E|_{U_{\alpha}}$ and define a connection on $H_{\alpha}$ by the
formula $\nabla_{\as}=d+\frac{dF_{\alpha}}{[p]}(F_0^*\theta_{\as})$,
where $\theta_{\alpha}$ is the Higgs field over $U_{\alpha}$ under
the local basis $e_{\as}$ of $E|_{U_{\alpha}}$.


\begin{proposition}
The connection $\nabla_{\as}$ on $H_{\alpha}$ is well defined and integrable.
\end{proposition}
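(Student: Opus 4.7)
The plan is to verify well-definedness and integrability separately by local computation. The key observation driving the argument is that in characteristic $p$ one has $d(F_0^*g) = d(g^p) = 0$ for any $g\in\sO_{U_\alpha}$, so anything in the image of absolute Frobenius pullback is killed by $d$.

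\textbf{Well-definedness.} Under a change of local basis $e'_\alpha = Me_\alpha$ of $E|_{U_\alpha}$, the $\sO$-linear Higgs field transforms by conjugation: $\theta'_\alpha = M\theta_\alpha M^{-1}$, and by naturality the same formula holds for $F_0^*\theta_\alpha$ under the induced change of basis $F_0^*M\cdot F_0^*e_\alpha$ of $H_\alpha$. Since $\frac{dF_\alpha}{[p]}$ is $\sO_{U_\alpha}$-linear, this gives $A'_\alpha = (F_0^*M)\, A_\alpha\, (F_0^*M)^{-1}$. Comparing with the connection transformation formula recalled in the excerpt, well-definedness reduces to the vanishing of the extra term $d(F_0^*M)(F_0^*M)^{-1}$; and indeed $d(F_0^*M)=0$ because the entries of $F_0^*M$ are $p$-th powers of entries of $M$.

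\textbf{Integrability.} Shrinking $U_\alpha$ if necessary, I choose étale coordinates $t_1,\ldots,t_d$ with lifts $\tilde t_i$ on $U'_\alpha$. Writing $F_\alpha^*\tilde t_i = \tilde t_i^p + p z_i$ for suitable $z_i\in\sO_{U'_\alpha}$, the one-forms $\zeta_i := \frac{dF_\alpha}{[p]}(F_0^*dt_i)\in\Omega_{U_\alpha}$ equal $t_i^{p-1}dt_i + d\bar z_i$, and each is closed (the first summand because $dt_i\wedge dt_i=0$, the second because $d^2=0$). The connection form then reads $A_\alpha = \sum_i (F_0^*\theta_i)\,\zeta_i$, so
$$dA_\alpha \;=\; \sum_i d(F_0^*\theta_i)\wedge \zeta_i \;+\; \sum_i (F_0^*\theta_i)\, d\zeta_i \;=\; 0,$$
since $d(F_0^*\theta_i)=0$ entrywise ($p$-th powers again) and $d\zeta_i=0$ by the preceding step, while
$$A_\alpha\wedge A_\alpha \;=\; \sum_{i<j}[F_0^*\theta_i, F_0^*\theta_j]\,\zeta_i\wedge\zeta_j \;=\; \sum_{i<j}F_0^*[\theta_i,\theta_j]\,\zeta_i\wedge\zeta_j \;=\; 0$$
by the integrability $[\theta_i,\theta_j]=0$ of the Higgs field (and the fact that Frobenius commutes with matrix multiplication, since $(ab)^p = a^p b^p$). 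Hence the curvature $K_\alpha = dA_\alpha + A_\alpha\wedge A_\alpha$ vanishes.

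I do not foresee any real obstacle in this particular proposition. The nilpotent hypothesis on $\theta$ plays no role here (it will matter only later, when the exponential series is used to glue these local flat bundles across different choices of Frobenius lifting); the entire argument rests on the twin vanishings $d\circ F_0^* = 0$ and $[\theta_i,\theta_j] = 0$.
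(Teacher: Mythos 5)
Your proof is correct and follows essentially the same route as the paper: well-definedness via $\theta'_\alpha=M\theta_\alpha M^{-1}$ together with $d(F_0^*M)=0$, and integrability by showing separately that $A_\alpha\wedge A_\alpha=0$ (from $[\theta_i,\theta_j]=0$) and $dA_\alpha=0$ (from $d\circ F_0^*=0$ and closedness of the forms $\frac{dF_{\alpha}}{[p]}(F_0^*\omega)$). The only cosmetic difference is that you verify the closedness of $\zeta_i=\frac{dF_{\alpha}}{[p]}(F_0^*dt_i)$ in explicit \'etale coordinates via $F_\alpha^*\tilde t_i=\tilde t_i^{\,p}+pz_i$, whereas the paper argues for arbitrary $\omega$ by lifting to $\omega'=\sum f_i\,dg_i$ and using $p^2\Omega^2_{U'_\alpha}=0$.
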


\begin{proof}
First we verify the well-definedness. Take another local basis $e'_{\alpha}=Me_{\alpha}$ of $E|_{U_{\alpha}}$ and put $\theta'_{\alpha}$ to be the Higgs field under this basis. According to the transformation formula for connection forms, we shall check that
$$
\frac{dF_{\alpha}}{[p]}(F_0^*\theta'_{\as})=(dF_0^{*}M)F_0^{*}M^{-1}+F_0^*M\frac{dF_{\alpha}}{[p]}(F_0^*\theta_{\as})F_0^*M^{-1}.
$$
As $\theta'_{\alpha}=M\theta_{\alpha}M^{-1}$ and $dF_0^*M=0$, the
above equality holds. Next we verify the integrability of
$\nabla_\alpha$. As $\theta$ is integrable, i.e.
$\theta_{\as}\wedge\theta_{\as}=0$, it follows that
$F_0^*(\theta_\alpha)\wedge
F_0^*(\theta_\alpha)=F_0^*(\theta_\alpha\wedge \theta_\alpha)=0$ and
furthermore $$ \frac{dF_{\alpha}}{[p]}(F_0^*\theta_{\as})\wedge
\frac{dF_{\alpha}}{[p]}(F_0^*\theta_{\as})=(\bigwedge^2\frac{dF_{\alpha}}{[p]})(F_0^*\theta_{\as}\wedge
F_0^*\theta_{\as})=0.
$$
It is left to show that
$d(\frac{dF_{\alpha}}{[p]}(F_0^*\theta_{\as}))=0$. This is done by a
local computation: by definition, for $\omega\in
\Omega_{U_{\alpha}}$, $\frac{dF_{\alpha}}{[p]}(F_0^*\omega)=
\frac{1}{[p]}(dF_{\alpha} (F_{\as}^*\omega'))$, where $\omega'\in
\Omega_{U'_{\alpha}}$ is any lifting of $\omega$. Then $d\circ
\frac{dF_{\alpha}}{[p]}(F_0^*\omega)=d\circ
\frac{1}{[p]}(dF_{\alpha} (F_{\as}^*\omega'))=\frac{1}{[p]}(d\circ
dF_{\alpha} (F_{\as}^*\omega'))$. We may write $\omega'=\sum_i
f_idg_i$ for $f_i,g_i\in \mathcal{O}_{U'_{\as}}$. Then
$d(dF_{\alpha}(F_{\as}^*\omega'))=\sum_i
d(dF_{\alpha}(F_{\as}^*f_i))\wedge d(dF_{\alpha}(F_{\as}^*g_i))\in
p^2\Omega^2_{U'_{\as}}=0$. Thus
$d(\frac{dF_{\alpha}}{[p]}(F_0^*\omega))=0$. Clearly, it follows
that $d(\frac{dF_{\alpha}}{[p]}(F_0^*\theta_{\as}))=0$. The
proposition is proved.
\end{proof}


Thus we have defined a local flat bundle
$(H_{\alpha},\nabla_{\alpha})$ over each $U_{\alpha}\in \sU$. In
order to glue them into a global one, we have to provide a set of
invertible matrices $\{G_{\alpha\beta}\}$ with $G_{\alpha\beta}$
defined over $U_{\alpha\beta}:=U_{\alpha}\cap U_{\beta}$ satisfying
\begin{itemize}
    \item [(i)] the bundle gluing condition over $U_{\alpha\beta\gamma}:=U_{\alpha}\cap U_{\beta}\cap U_{\gamma}$
$$
G_{\alpha\beta}\cdot G_{\beta\gamma}=G_{\alpha\gamma},
$$
\item [(ii)] the connection gluing condition over $U_{\as\bb}$
$$ \frac{dF_{\alpha}}{[p]}(F_0^*\theta_{\as})=dG_{\as \bb}G_{\as\bb}^{-1}  + G_{\as\bb}  \frac{dF_{\beta}}{[p]}(F_0^*\theta_{\bb})G_{\as\bb}^{-1}.$$
\end{itemize}
First a lemma:
\begin{lemma}\label{lemma} There are homomorphisms $h_{\as \bb}:F_0^*\Omega_{U_{\alpha\beta}}\rightarrow \mathcal{O}_{U_{\alpha\beta}}$, satisfying the following two properties:
\begin{itemize}
    \item [(i)] $\frac{dF_{\alpha}}{[p]}-\frac{dF_{\beta}}{[p]}=dh_{\alpha\beta}$,
    \item [(ii)] the cocyle condition over $U_{\alpha\beta\gamma}$: $h_{\as\bb} + h_{\bb\g}=h_{\as\g}$.
\end{itemize}
\end{lemma}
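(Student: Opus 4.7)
The plan is to define $h_{\alpha\beta}$ by the classical Mazur-style formula: for a local section $f \in \mathcal{O}_{U_{\alpha\beta}}$ and any lift $\tilde f \in \mathcal{O}_{U'_{\alpha\beta}}$, set
$$
h_{\alpha\beta}(F_0^{*}df) = \frac{F_{\alpha}^{*}(\tilde f) - F_{\beta}^{*}(\tilde f)}{p} \bmod p.
$$
Since $F_{\alpha}$ and $F_{\beta}$ both reduce to $F_0$ modulo $p$, the numerator lies in $p\,\mathcal{O}_{U'_{\alpha\beta}}$, so the division by $p$ is legitimate. First I would check independence of the lift: replacing $\tilde f$ by $\tilde f + pg$ changes each $F_{\bullet}^{*}(\tilde f)$ by $p F_{\bullet}^{*}(g)$, whose reduction mod $p$ equals $F_0^{*}(\bar g)$ in both cases, so the difference is zero. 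Next I would verify the Leibniz rule: using $F_{\bullet}^{*}(\tilde f \tilde g) = F_{\bullet}^{*}(\tilde f) F_{\bullet}^{*}(\tilde g)$ and the standard identity $ab - a'b' = (a - a')b + a'(b - b')$, one gets
$$
\frac{F_{\alpha}^{*} - F_{\beta}^{*}}{p}(\tilde f \tilde g) \equiv F_0^{*}(f)\cdot \frac{F_{\alpha}^{*} - F_{\beta}^{*}}{p}(\tilde g) + F_0^{*}(g)\cdot \frac{F_{\alpha}^{*} - F_{\beta}^{*}}{p}(\tilde f) \pmod p,
$$
so $f \mapsto \tfrac{F_{\alpha}^{*}(\tilde f) - F_{\beta}^{*}(\tilde f)}{p}$ is an $F_0$-twisted derivation $\mathcal{O}_{U_{\alpha\beta}} \to \mathcal{O}_{U_{\alpha\beta}}$. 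By the universal property of $\Omega$ this factors through a well-defined $\mathcal{O}_{U_{\alpha\beta}}$-linear map $h_{\alpha\beta}: F_0^{*}\Omega_{U_{\alpha\beta}} \to \mathcal{O}_{U_{\alpha\beta}}$.

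For property (i), I would compute on exact differentials, which generate $F_0^{*}\Omega$ locally. For $\omega = df$ with lift $\tilde\omega = d\tilde f$, the definition of $\tfrac{dF_{\alpha}}{[p]}$ gives $\tfrac{dF_{\alpha}}{[p]}(F_0^{*}df) = \tfrac{1}{p}\, d(F_{\alpha}^{*}\tilde f) \bmod p$. Subtracting the analogous expression for $F_{\beta}$ and writing $F_{\alpha}^{*}(\tilde f) - F_{\beta}^{*}(\tilde f) = p s$, one has
$$
\Bigl(\tfrac{dF_{\alpha}}{[p]} - \tfrac{dF_{\beta}}{[p]}\Bigr)(F_0^{*}df) = \tfrac{1}{p}\, d(p s) \bmod p = d(\bar s) = d\bigl(h_{\alpha\beta}(F_0^{*}df)\bigr),
$$
since exterior differentiation commutes with reduction mod $p$. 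This proves (i) on exact forms, and the identity extends by $\mathcal{O}_{U_{\alpha\beta}}$-linearity. Property (ii) is then immediate from the defining formula by telescoping: $h_{\alpha\beta} + h_{\beta\gamma} - h_{\alpha\gamma}$ evaluated on $F_0^{*}df$ gives $\tfrac{1}{p}\bigl((F_{\alpha}^{*} - F_{\beta}^{*}) + (F_{\beta}^{*} - F_{\gamma}^{*}) - (F_{\alpha}^{*} - F_{\gamma}^{*})\bigr)(\tilde f) = 0$.

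The only conceptual step is the well-definedness of $h_{\alpha\beta}$ as an $\mathcal{O}_{U_{\alpha\beta}}$-linear map out of $F_0^{*}\Omega$; once the derivation property is checked, properties (i) and (ii) are essentially tautological mod-$p$ computations. This is of course exactly the construction from \cite{DI}, repackaged to feed into the Higgs-to-flat gluing.
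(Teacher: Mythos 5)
Your construction is correct in substance, but it takes a genuinely different route from the paper: you reprove from scratch the content of Illusie's Lemma 5.4 (the explicit formula $h_{\alpha\beta}(F_0^*df)=\tfrac{(F_\alpha^*-F_\beta^*)(\tilde f)}{p}\bmod p$, independence of the lift, the $F_0$-twisted Leibniz rule, and the verification of (i) and (ii) on generators), whereas the paper simply cites that lemma and devotes its entire proof to a point you pass over silently: $F_\alpha$ is a priori only an endomorphism of $U'_\alpha$, not of $U'_{\alpha\beta}$, so before writing $F_\alpha^*(\tilde f)$ for $\tilde f\in\mathcal{O}_{U'_{\alpha\beta}}$ one must check that $F_\alpha$ restricts to a Frobenius lifting of $F_0$ on the overlap. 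The paper does this by a base-change argument: the pullback of $U'_{\alpha\beta}$ under $F_\alpha$ has closed fibre $F_0^{-1}(U_{\alpha\beta})=U_{\alpha\beta}$, hence is $U'_{\alpha\beta}$ itself, so both $F_\alpha$ and $F_\beta$ give Frobenius liftings on $U'_{\alpha\beta}$ to which the cited lemma applies. This step is easy (it holds because $F_\alpha$ lifts $F_0$, which is the identity on underlying topological spaces), but it is the one step your write-up omits and the only step the paper's proof actually carries out; your direct approach buys self-containedness at the cost of this restriction argument. One further imprecision: for (i) you say the identity ``extends by $\mathcal{O}_{U_{\alpha\beta}}$-linearity'' from exact forms, but the right-hand side $d\circ h_{\alpha\beta}$ is not $\mathcal{O}_{U_{\alpha\beta}}$-linear, so the identity cannot hold on all of $F_0^*\Omega_{U_{\alpha\beta}}$. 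It does hold on sections of the form $F_0^*\omega$ with $\omega\in\Omega_{U_{\alpha\beta}}$ --- which is exactly how it is used later, applied to $F_0^*\theta_\alpha$ --- because the coefficients arising there are $p$-th powers and are therefore annihilated by $d$. With the restriction argument supplied and the scope of (i) stated precisely, your proof is complete.
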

 \begin{proof}
Consider the $W_2$-morphism $G_\alpha: Z'_{\alpha}\to U'_{\alpha\beta}:=U'_{\alpha}\cap U'_{\beta}$ sitting in the following Cartesian diagram:
\[
\begin{CD}
 Z'_{\alpha}@> G_{\as}>>  U'_{\as\bb} \\
 @Vj'_{\as}VV       @VVi'_{\alpha}V\\
 U'_{\as} @>F_{\alpha}>> U'_{\as}
 \end{CD}
\]
where $i'_{\as}$ is the natural inclusion. By reduction modulo $p$, we obtain the following Cartesian square
\[
\begin{CD}
  Z_{\as}@>G_0 >>U_{\as\bb}     \\
 @Vj_{\as}VV       @VVi_{\alpha}V\\
 U_{\as} @>F_0>> U_{\as}
 \end{CD}
\]
Thus we see that $Z_{\as}$ is $U_{\as\bb}$ and $G_{\alpha}:
Z'_{\alpha}\to U'_{\alpha}$ is a lifting of the absolute Frobenius
$F_0$ over $U_{\alpha\beta}$. Similarly for $(U'_{\bb},F_{\beta})$,
we have  $G_{\bb}:Z'_{\bb}\rightarrow U'_{\as\bb} $ which is also a
lifting of $F_0:U_{\as\bb}\rightarrow U_{\as\bb}$. Now we apply
Lemma 5.4 \cite{IL} to the pair $(G_{\as}:Z'_{\as}\rightarrow
U'_{\as\bb}, \ G_{\bb}:Z'_{\bb}\rightarrow  U'_{\as\bb})$ of
Frobenis liftings of the absolute Frobenius $F_0$ on $U_{\as\bb}$,
we get the homomorphisms $h_{\as
\bb}:F_0^*\Omega_{U_{\as\bb}}\rightarrow \mathcal{O}_{U_{\as\bb}}$
such that
$\frac{dF_{\alpha}}{[p]}-\frac{dF_{\beta}}{[p]}=dh_{\alpha\beta}$
and $h_{\as\bb}+h_{\bb\g}=h_{\as\g}$.
\end{proof}
By the lemma, we have locally over $U_{\alpha\beta}$ a matrix of functions $h_{\as\bb}(F_0^*\theta_{\as})$. Notice that, because of the assumption on the exponent of the nilpotent Higgs field $\theta$, the matrix of functions $$g_{\as\bb}:=\exp[h_{\as\bb}(F_0^*\theta_{\as})]$$ is well defined and in fact equal to the finite sum $\sum_{i=0}^{n}\frac{(h_{\as\bb}(F_0^*\theta_{\as}))^i}{i!}$. It is clearly invertible with the inverse $\exp[-h_{\as\bb}(F_0^*\theta_{\as})]$. Furthermore, over $U_{\as\bb}$ we have the transition matrix $\{M_{\alpha\beta}\}$ such that $e_{\alpha}=M_{\alpha\beta}e_{\beta}$. As $E$ is globally defined, one has the cocycle condition for $\{M_{\alpha\beta}\}$:
$$
M_{\as\bb}\cdot M_{\bb\g}=M_{\as\g}.
$$
Now we define our gluing matrices for $\{H_{\alpha}\}_{\alpha\in \sU}$ by $G_{\as\bb}:= g_{\as\bb}\cdot F_0^*M_{\as\bb}$. We have then the following:


\begin{theorem}
The local flat bundles $\{(H_{\as},\nabla_{\as})\}_{\alpha\in\sU}$
glue into a global flat bundle $(H,\nabla)$ via the matrices
$\{G_{\as\bb}\}$.
\end{theorem}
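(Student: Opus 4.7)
The plan is to verify the two gluing conditions (i) and (ii) stated just before Lemma~\ref{lemma}, with $G_{\alpha\beta} = g_{\alpha\beta}\cdot F_0^*M_{\alpha\beta}$. The key ingredients will be: the identities supplied by Lemma~\ref{lemma}; the fact that $d\circ F_0^* = 0$ on functions (since $F_0^*f = f^p$ in characteristic $p$); the Higgs transformation $\theta_\beta = M_{\alpha\beta}^{-1}\theta_\alpha M_{\alpha\beta}$, which follows from the change-of-basis formula for Higgs fields (no $dM$ term, since $\theta$ is $\sO$-linear); and, used repeatedly, the commutativity of the matrices $F_0^*\theta_{\alpha,i}$ coming from integrability $[\theta_{\alpha,i},\theta_{\alpha,j}] = 0$.

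For the cocycle condition (i), I first pull back the Higgs transformation by $F_0$ to get $F_0^*\theta_\beta = (F_0^*M_{\alpha\beta})^{-1}(F_0^*\theta_\alpha)(F_0^*M_{\alpha\beta})$. Since $h_{\beta\gamma}$ is $\sO$-linear on its form argument, this conjugation passes through $h_{\beta\gamma}$, and then through the exponential of the resulting nilpotent element, yielding $g_{\beta\gamma} = (F_0^*M_{\alpha\beta})^{-1}\exp[h_{\beta\gamma}(F_0^*\theta_\alpha)](F_0^*M_{\alpha\beta})$. Expanding $G_{\alpha\beta}G_{\beta\gamma}$, the two inner $F_0^*M$-factors cancel and the outer two fuse to $F_0^*M_{\alpha\gamma}$ via the $M$-cocycle, leaving $\exp[h_{\alpha\beta}(F_0^*\theta_\alpha)]\cdot\exp[h_{\beta\gamma}(F_0^*\theta_\alpha)]$. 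The two exponents are scalar-coefficient linear combinations of the commuting matrices $F_0^*\theta_{\alpha,i}$, hence commute, so the product collapses to $\exp[(h_{\alpha\beta}+h_{\beta\gamma})(F_0^*\theta_\alpha)] = g_{\alpha\gamma}$ by Lemma~\ref{lemma}(ii).

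For the connection identity (ii), the first step is $d(F_0^*M_{\alpha\beta}) = 0$, which gives $dG_{\alpha\beta}G_{\alpha\beta}^{-1} = dg_{\alpha\beta}g_{\alpha\beta}^{-1}$. Writing $T := h_{\alpha\beta}(F_0^*\theta_\alpha)$, both $T$ and $dT$ have entries in the commutative algebra generated by the $F_0^*\theta_{\alpha,i}$ together with scalar functions and $1$-forms, so $T$ and $dT$ commute and therefore $d(\exp T)\cdot\exp(-T) = dT$. Lemma~\ref{lemma}(i) applied to $F_0^*\theta_\alpha$ then gives $dT = \frac{dF_\alpha}{[p]}(F_0^*\theta_\alpha) - \frac{dF_\beta}{[p]}(F_0^*\theta_\alpha)$. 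Next, conjugating $\frac{dF_\beta}{[p]}(F_0^*\theta_\beta)$ by $F_0^*M_{\alpha\beta}$ replaces $\theta_\beta$ by $\theta_\alpha$, and $g_{\alpha\beta}$ commutes with $\frac{dF_\beta}{[p]}(F_0^*\theta_\alpha)$ for the same commutativity reason as above. Assembling these pieces, the right-hand side of (ii) becomes $dT + \frac{dF_\beta}{[p]}(F_0^*\theta_\alpha) = \frac{dF_\alpha}{[p]}(F_0^*\theta_\alpha)$, as required.

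I do not expect any serious obstacle; the only delicate point is the bookkeeping of the several commutativity identities (between $T$ and $dT$, between $g_{\alpha\beta}$ and $\frac{dF_\beta}{[p]}(F_0^*\theta_\alpha)$, and between $h_{\alpha\beta}(F_0^*\theta_\alpha)$ and $h_{\beta\gamma}(F_0^*\theta_\alpha)$), all of which ultimately reduce to integrability $[\theta_{\alpha,i},\theta_{\alpha,j}] = 0$ and the $\sO$-linearity of $h_{\alpha\beta}$ and $\frac{dF_\beta}{[p]}$ on their form arguments.
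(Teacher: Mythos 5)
Your proposal is correct and follows essentially the same route as the paper's proof: Step 1 conjugates $\theta_\beta$ back to $\theta_\alpha$ through $h_{\beta\gamma}$ and the exponential, combines the two exponentials using commutativity of the nilpotent exponents, and invokes Lemma~\ref{lemma}(ii); Step 2 reduces $dG_{\alpha\beta}G_{\alpha\beta}^{-1}$ to $dg_{\alpha\beta}g_{\alpha\beta}^{-1}=d\bigl(h_{\alpha\beta}(F_0^*\theta_\alpha)\bigr)$ and applies Lemma~\ref{lemma}(i), exactly as in the paper. The commutativity bookkeeping you flag as the delicate point is handled identically there, so no further comment is needed.
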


\begin{proof} The proof is divided into two steps.\\
{\itshape Step 1: Bundle gluing.} It is to show the cocycle condition $$G_{\as\bb}\cdot G_{\bb\g}=G_{\as\g}.$$
It is a direct computation. First of all,
\begin{eqnarray*}
  g_{\bb\g}&=& \exp[h_{\bb\g}(F_0^*\theta_{\bb})]\\
&=&\exp[h_{\bb\g}(F_0^*(M_{\as\bb}^{-1}\theta_{\as}M_{\as\bb}))]\\
&=&\exp[F_0^*M_{\as\bb}^{-1} h_{\bb\g}(F_0^*\theta_{\as})F_0^*M_{\as\bb}] \\
  &=&F_0^*M_{\as\bb}^{-1}\exp[h_{\bb\g}(F_0^*\theta_{\as})]F_0^*M_{\as\bb}.
\end{eqnarray*}
So it follows that
\begin{eqnarray*}
 G_{\as\bb}\cdot G_{\bb\g}&=&g_{\as\bb}F_0^*M_{\as\bb}\cdot g_{\bb\g} F_0^*M_{\bb\g}\\
 &=&exp[h_{\as\bb}(F_0^*\theta_{\as})]\exp[h_{\bb\g}(F_0^*\theta_{\as})]F_0^*(M_{\as\bb}\cdot M_{\bb\g})\\
 &=& exp[h_{\as\bb}(F_0^*\theta_{\as})]\exp[h_{\bb\g}(F_0^*\theta_{\as})]F_0^*M_{\alpha\gamma}.
\end{eqnarray*}
It follows from the integrability of Higgs field that the two matrices $h_{\as\bb}(F_0^*\theta_{\as})$ and $h_{\bb\g}(F_0^*\theta_{\as})$ commute with each other. Thus we compute further that
\begin{eqnarray*}
 G_{\as\bb}\cdot G_{\bb\g}&=&\exp[(h_{\as\bb}+h_{\bb\g})(F_0^*\theta_{\as})] F_0^*M_{\as\g} \\
 &=&\exp[h_{\as\g}(F_0^*\theta_{\as})]F_0^*M_{\as\g}\\
 &=& G_{\alpha\g}.
\end{eqnarray*}
The second equality follows from Lemma \ref{lemma} (ii).\\
{\itshape Step 2: Connection gluing.} It is to show that the local
connections  $\{\nabla_{\as}\}$ coincide on the overlaps. Recall
that by definition
$\nabla_{\as}(F_0^*e_{\as})=\frac{dF_{\alpha}}{[p]}(F_0^*\theta_{\as})
F_0^*e_{\as}$. Over $U_{\alpha\beta}$, it holds that
\[
  \begin{array}{lll}
\nabla_{\bb}(F_0^*e_{\as})&=& \nabla_{\bb}(G_{\as\bb}F_0^*e_{\bb}) \\
 &=& dG_{\as \bb}F_0^*e_{\bb} +G_{\as\bb} \nabla_{\bb}(F_0^*e_{\bb})\\
&=& [dG_{\as \bb}G_{\as\bb}^{-1}  + G_{\as\bb}
\frac{dF_{\beta}}{[p]}(F_0^*\theta_{\bb})G_{\as\bb}^{-1}](F_0^*e_{\as}).
 \end{array}
\]
As $d(F_0^*M_{\as\bb})=0$ and $\theta_{\bb}=M_{\as\bb}^{-1} \theta_{\as} M_{\as\bb}$, we have further
\[
  \begin{array}{lll}
\nabla_{\bb}(F_0^*e_{\as})&=& [dg_{\as\bb} g_{\as\bb}^{-1}+g_{\as\bb} \frac{dF_{\beta}}{[p]}(F_0^*\theta_{\as})g_{\as\bb}^{-1}]F_0^*e_{\as} \\
 &=&[dh_{\as\bb}(F_0^*\theta_{\as})+\frac{dF_{\beta}}{[p]}(F_0^*\theta_{\as})] F_0^*e_{\as}\\
&=&\frac{dF_{\alpha}}{[p]}(F_0^*\theta_{\as})F_0^*e_{\as}\\
&=&\nabla_{\as}(F_0^*e_{\as}).
 \end{array}
\]
Here the second equality follows from the fact that $g_{\as\bb}$
commutes with $\frac{dF_{\beta}}{[p]}(F_0^*\theta_{\as})$ which is
again because of the integrability of the Higgs field, and the third
equality uses Lemma \ref{lemma} (i). 
\end{proof}

Presumably, the association is equivalent to the inverse Cartier
transform in \cite{OV}. This has not been verified at the moment.
However, we shall show that it is equivalent to that of \cite{SXZ}
in the geometric case. For sake of convenience, we recall briefly
the setup loc. cit. Let $(H,\nabla,Fil,\Phi)$ be an object in the
category $\mathcal{MF}^{\nabla}_{[0,n]}(X)$ with $n\leq p-2$, where
$X$ is a smooth scheme over $W_{n+1}$. Let
$(E,\theta)=Gr_{Fil}(H,\nabla)$ be the associated Higgs bundle and
$(H,\nabla)_0$ (resp. $(E,\theta)_0$) the flat bundle (resp. the
Higgs bundle) in characteristic $p$. In loc. cit., a flat subbundle
$(H_{(G,\theta)},\nabla)$ of $(H,\nabla)_0$ is associated to any
Higgs subbundle $(G,\theta)$ of $(E,\theta)_0$. To distinguish the
notations,  we denote by $(H_{exp},\nabla_{exp})$ the flat bundle by
applying the previous construction to $(G,\theta)$. We claim the
following
\begin{proposition}
Notation as above. Then there is a natural isomorphism of flat
bundles $(H_{exp},\nabla_{exp}) \cong (H_{(G,\theta)},\nabla)$
induced by the relative Frobenius.
\end{proposition}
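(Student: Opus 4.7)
The plan is to establish the isomorphism locally on the affine covering $\{U_{\as}\}$ used in the exp construction and then verify compatibility with the gluing data. Fix the Frobenius lifts $F_{\as}$ chosen in that construction. The key input from \cite{SXZ} I would use is that, locally on $U_{\as}$, the Frobenius structure $\Phi$ together with the filtration $Fil$ yields an explicit isomorphism
$\zeta_{\as}: F_0^*E|_{U_{\as}}\stackrel{\sim}{\to} H|_{U_{\as}}$
which, informally, is the mod-$p$ reduction of $\sum_i p^{-i}\Phi|_{Fil^i}$; here $p^{-i}\Phi$ makes sense because $\Phi(Fil^i)\subset p^iH$. In \cite{SXZ} the flat subbundle $(H_{(G,\theta)},\nabla)$ is defined as the image of $F_0^*G|_{U_{\as}}$ under $\zeta_{\as}$, with connection inherited from $(H,\nabla)_0$.

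First I would check that, after transport by $\zeta_{\as}$, the connection on $F_0^*E|_{U_{\as}}$ coincides with $\nabla_{\as}=d+\frac{dF_{\as}}{[p]}(F_0^*\theta_{\as})$. This is a direct local computation: one uses Griffiths transversality $\nabla(Fil^i)\subset Fil^{i-1}\otimes\Omega$, the divisibility $\Phi(Fil^i)\subset p^iH$, and the identification $Gr_{Fil}(\nabla)=\theta$. Applying $\nabla$ to $\zeta_{\as}$ of a graded element and invoking the chain rule for $F_{\as}$ produces precisely the factor $\frac{dF_{\as}}{[p]}$ paired with $F_0^*\theta_{\as}$, matching $\nabla_{\as}$. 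Restricting to $F_0^*G$ gives the isomorphism of flat bundles over each $U_{\as}$.

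Next I would verify compatibility on $U_{\as\bb}$. Since $\zeta_{\as}$ and $\zeta_{\bb}$ both identify $F_0^*G|_{U_{\as\bb}}$ (after twisting by the global transition $F_0^*M_{\as\bb}$ of $E$) with the same submodule $H_{(G,\theta)}|_{U_{\as\bb}}$ of $H|_{U_{\as\bb}}$, their ratio $\zeta_{\bb}^{-1}\circ\zeta_{\as}$ is an $\sO_{U_{\as\bb}}$-linear automorphism of $F_0^*G|_{U_{\as\bb}}$. A computation modeled on the Deligne-Illusie/Faltings construction of the filtered quasi-isomorphism between de Rham and Higgs complexes identifies this ratio with the exponential of the difference of the two Frobenius lifts applied to $F_0^*\theta$; by Lemma \ref{lemma} (i) this difference acts as $h_{\as\bb}(F_0^*\theta_{\as})$, so the ratio equals $g_{\as\bb}=\exp[h_{\as\bb}(F_0^*\theta_{\as})]$. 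Together with the $F_0^*M_{\as\bb}$ factor this is exactly $G_{\as\bb}$, so the $\{\zeta_{\as}\}$ assemble into a global isomorphism $(H_{exp},\nabla_{exp})\cong(H_{(G,\theta)},\nabla)$ induced by the relative Frobenius through $\Phi$.

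The main obstacle is not conceptual but notational: one has to unpack \cite{SXZ} enough to isolate the local isomorphism $\zeta_{\as}$, verify the connection formula it produces, and compute its transition on overlaps. Both matchings, however, are governed by the same mechanism that makes the exp construction work in the first place---the discrepancy $h_{\as\bb}$ between Frobenius lifts acts on the nilpotent field $\theta$ through the exponential series, and the cocycle property of Lemma \ref{lemma} (ii) then propagates the local identifications consistently across triple overlaps.
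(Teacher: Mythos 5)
Your proposal follows essentially the same route as the paper: the local isomorphisms you call $\zeta_{\as}$ are exactly the maps $\tilde{\Phi}_{\tilde F_{\as}}$ of \cite{SXZ} (the mod-$p$ reduction of $p^{-i}\Phi$ on $Fil^i$), and the two checks you describe---that the transported connection is $d+\frac{dF_{\as}}{[p]}(F_0^*\theta_{\as})$ and that the ratio $\zeta_{\bb}^{-1}\circ\zeta_{\as}$ is $g_{\as\bb}$---are precisely the paper's Steps 2 and 1. The only place you stay vague (``a computation modeled on Deligne--Illusie/Faltings'') is where the paper does the concrete work, quoting Lemma 2.4 of \cite{SXZ} for the Taylor factor $1+\sum_{|\underline{j}|=1}^{n}F_0^*(\theta_{\partial}^{\underline{j}})\frac{z^{\underline{j}}}{\underline{j}!}$ and matching it to the truncated exponential by the multinomial identity, which uses the commutativity of the $\theta_l$.
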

\begin{proof}
The isomorphism is obtained by gluing a set of local isomorphisms $\{\tilde{\Phi}_{\tilde F_{\as}}\}$ which is constructed as follows:  recall that loc. cit. for any $W_{n+1}$-lifting $(\tilde U_{\alpha},\tilde F_{\alpha})$ of the pair $(U'_{\alpha},F_{\alpha})$, there is a map $\tilde{\Phi}_{\tilde F_{\as}}: F_0^*(E_0|_{U_{\as}})\stackrel{\cong}{\longrightarrow}H_0|_{U_{\as}} $, induced by relative Frobenius $\Phi$. By Proposition 2.2 loc. cit. this isomorphism does not depend on the choice of liftings $\tilde F_{\alpha}$.  Since $H_{(G,\theta)}|_{U_{\as}}$ is defined to be $\tilde{\Phi}_{\tilde F_{\as}}(F_0^*G|_{U_0})$ by loc. cit., $\tilde{\Phi}_{\tilde F_{\as}}$ induces an isomorphism from $H_{\as}$ to $H_{(G,\theta)}|_{U_{\as}}$, which by abuse of notation is denoted by $\tilde \Phi_{F_{\alpha}}$.\\
{\itshape Step 1: Bundle isomorphism.}
By choosing a local basis $e$ of sections of $G$ over $U_{\as\bb}$, we are going to show that over $ U_{\as\bb}$,
$$\tilde{\Phi}_{F_{\as}}(F_0^*e)=g_{\as\bb}\tilde{\Phi}_{F_{\bb}}(F_0^*e).$$
For simplicity of notation, we denote simply by $\theta$ the Higgs field under the basis $e$. For a multi-index $\underline{j}=(j_1,\cdots,j_d)$, we put  $$\theta_{\partial}^{\underline{j}}=(\partial_{t_1}\lrcorner \theta)^{j_1}\cdots (\partial_{t_d}\lrcorner \theta)^{j_d},\ z_l=\left(\frac{ F_{\as}-F_{\bb}}{[p]} \right)(F_0^*t_l), \ z^{\underline{j}}=\prod_{l=1}^d z_l^{j_l}.$$
(Notice that $z_l$ and $z^{\underline{j}}$ differ from the notation loc. cit..) According to Lemma 2.4 loc. cit., we have by abuse of notation
$$
\tilde{\Phi}_{F_{\as}}(F_0^*e )=\left(1+\sum_{|\underline{j}|=1}^{n}F_0^*(\theta_{\partial}^{\underline{j}})\cdot\frac{z^{\underline{j}}}{\underline{j}\,!}\right)\cdot \tilde{\Phi}_{F_{\bb}}(F_0^*e).
$$
So it suffices to show $g_{\as\bb}=1+\sum_{|\underline{j}|=1}^{n}F_0^*(\theta_{\partial}^{\underline{j}})\cdot\frac{z^{\underline{j}}}{ \underline{j}\,!}$.
As $$
h_{\as\bb}(F_0^*\theta)=\sum_{l=1}^d F_0^*(\partial_{t_l}\lrcorner \theta) h_{\as\bb}(F_0^*dt_l)$$
and $h_{\as\bb}(F_0^*dt_l)=\left(\frac{ F_{\as}-F_{\bb}}{[p]} \right)(F_0^*t_l)=z_l$, it follows that
$$
\frac{(h_{\as\bb}(F_0^*\theta ))^i}{i!}=\frac{(\sum_{l=1}^d F_0^*(\partial_{t_l}\lrcorner \theta)z_l)^i}{i!}=\sum_{|\underline{j}|=i}\frac{F_0^*(\theta_{\partial}^{\underline{j}})z^{\underline{j}} }{\underline{j}\, !}.
$$
Recall that $g_{\as\bb}=\sum_{i=0}^{n}\frac{(h_{\as\bb}(F_0^*\theta ))^i}{i!}$, it follows then $g_{\as\bb}=1+\sum_{|\underline{j}|=1}^{n}F_0^*(\theta_{\partial}^{\underline{j}})\cdot\frac{z^{\underline{j}}}{ \underline{j}\,!}$ as wanted.

{\itshape Step 2: Connection isomorphism.}
We need to show that under the above isomorphism, the connection $\nabla_{exp}$ on $H_{exp}$ is equal to the connection $\nabla $ on $H_{(G,\theta)}$. Put $e_\alpha$ to be a local basis of $G$ over $U_{\alpha}$. By Lemma 2.5 and the proof of Proposition 2.6 loc. cit., we have
$$
\nabla[\tilde{\Phi}_{F_{\alpha}}(F_0^*e_\as)]=\left[\sum_{l=1}^d
F_0^*\theta_l
\frac{dF_{\alpha}}{[p]}(F_0^*dt_l)\right]\tilde{\Phi}_{F_\as}(F_0^*e_\as)
$$
As $\nabla_{exp}(F_0^*e_\as)=\left[\sum_{l=1}^d
F_0^*\theta_l\cdot\frac{dF_{\alpha}}{[p]}(F_0^*dt_l)\right]F_0^*e_\as$,
it follows that
$$\tilde{\Phi}_{F_\as}(\nabla_{exp}(F_0^*e_\as))=\left[\sum_{l=1}^d F_0^* \theta_l\frac{dF_{\alpha}}{[p]}(F_0^*dt_l)\right]\tilde{\Phi}_{F_\as}(F_0^*e_\as) =\nabla[\tilde{\Phi}_{F_\as}(F_0^*e_\as)].$$
\end{proof}

{\bf Acknowledgement:} We would like to thank Arthur Ogus heartily for his valuable comments on this note, particularly pointing us an error in the first version of the note.

\end{document}